\theoremstyle{plain}
\newtheorem{theorem}{Theorem}[section]
\newtheorem{prop}[theorem]{Proposition}
\newtheorem{cor}[theorem]{Corollary}
\theoremstyle{definition}
\theoremstyle{remark}
\newtheorem{rmk}[theorem]{Remark}
\newcommand{\set}[1]{{\left\{#1\right\}}}
\newcommand{\pa}[1]{{\left(#1\right)}}
\newcommand{\abs}[1]{{\left|#1\right|}}
\newcommand{\ra}{\rightarrow}
\newcommand{\sheaf}[2]{\mathcal{#1}_{#2}}
\newcommand{\fascio}[1]{\sheaf{O}{#1}}
\newcommand{\st}{\text{ s.t. }}
\newcommand{\pr}{\mathbb{P}}
\DeclareMathOperator{\pic}{Pic}
\title{Points of order two on theta divisors}
\author{Valeria Ornella Marcucci}
\address{Dipartimento di Matematica ``F. Casorati''\\
 	Universit\`a di Pavia\\
	via Ferrata 1, 27100 Pavia, Italy}
	\email{valeria.marcucci@unipv.it}
\author{Gian Pietro Pirola}
\address{Dipartimento di Matematica ``F. Casorati''\\
 	Universit\`a di Pavia\\
	via Ferrata 1, 27100 Pavia, Italy}
	\email{gianpietro.pirola@unipv.it}
\date{\today}
\subjclass[2010]{14K25}
\thanks{This work has been partially supported by 1) FAR 2010 (PV) \emph{"Variet\`a algebriche, calcolo algebrico, grafi orientati e topologici"} 2) INdAM (GNSAGA) 3) PRIN 2009 \emph{``Moduli, strutture geometriche e loro applicazioni''}}
\begin{document}
\begin{abstract}
 We give a bound on the number of points of order two on the theta divisor of a principally polarized abelian variety $A$. When $A$ is the Jacobian of a curve $C$ the result can be applied in estimating the number of effective square roots of a fixed line bundle on $C$.
\end{abstract}
\maketitle

\section*{Introduction}

In this paper we give an upper bound on the number of $2$-torsion points lying on a theta divisor of a principally polarized abelian variety. Given any principally polarized abelian variety $A$ of dimension $g$ and symmetric theta divisor $\Theta\subset A$, $\Theta$ contains at least $2^{g-1}\pa{2^g-1}$ points of order two, the odd theta characteristics. Moreover, in \cite{mumequation} and \cite[Chapter IV, Section 5]{igusa} it is proved that $\Theta$ cannot contain all points of order two on $A$. 

In this work we use the projective representation of the theta group to prove the following:

\medskip

\emph{Given a principally polarized abelian variety $A$, any translated $t^*_a\Theta$ of a theta divisor $\Theta\subset A$ contains at most $2^{2g}-2^{g}$ points  of order $2$ ($2^{2g}-\pa{g+1}2^{g}$ if $t^*_a\Theta$ is irreducible and not symmetric).}

\medskip

Our bound is far from being sharp and we conjecture that the right estimate should be $2^{2g}-3^g$ as in the case of a product of elliptic curves.

When $A$ is the Jacobian of a curve $C$ the result can be applied in estimating the number of effective square roots of a fixed line bundle on $C$ (cf. Section \ref{sec:applications}).

\section{Main result}

In this section we prove our main result.

\begin{theorem}
\label{theo:theta}
Let $A$ be a principally polarized abelian variety of dimension $g$ and let $\Theta$ be a symmetric theta divisor. 
\begin{enumerate}
 \item \label{eq:firststat} For each $a \in A$ there are at most $2^{2g}-2^{g}$ points of order two lying on $t_a^*\Theta$.
\item \label{eq:secondstat} Let $a\in A$ and assume that $\Theta$ is irreducible and $t_a^*\Theta$ is not symmetric with respect to the origin. Then there are at most $2^{2g}-\pa{g+1}2^{g}$ points of order two lying on $t_a^*\Theta$.
\end{enumerate}
\end{theorem}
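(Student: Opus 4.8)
The plan is to transfer everything to the linear system $|2\Theta|$ together with the theta-group action on it. Set $\mathcal L=\fascio{A}(2\Theta)$, $V=H^0(A,\mathcal L)$, so $\dim V=2^g$ and $K(\mathcal L)=A[2]$. For $\epsilon\in A[2]$ the divisor $2t_\epsilon^*\Theta$ lies in $|\mathcal L|$; fix a defining section $\sigma_\epsilon\in V$, so $\operatorname{div}\sigma_\epsilon=2t_\epsilon^*\Theta$. The $2^{2g}$ lines $[\sigma_\epsilon]\subset\mathbb P(V)$ are pairwise distinct — the divisors are distinct because $K(\fascio{A}(\Theta))=0$ — they are permuted by the projective representation $\bar\rho\colon A[2]\to\operatorname{PGL}(V)$, and by irreducibility of $V$ as a $G(\mathcal L)$-module they span $V$. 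Since $-\epsilon=\epsilon$ one has $\epsilon\in t_a^*\Theta\iff a\in t_\epsilon^*\Theta=\supp(\operatorname{div}\sigma_\epsilon)$; as $|2\Theta|$ is base-point free, $\operatorname{ev}_a\in V^*$ is nonzero, so $H_a:=\ker\operatorname{ev}_a$ is a hyperplane and $t_a^*\Theta\cap A[2]=\set{\epsilon\in A[2]:\sigma_\epsilon\in H_a}$. Thus \eqref{eq:firststat} asks that a hyperplane contain at most $2^{2g}-2^g$ of the $\sigma_\epsilon$.

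The engine of both statements — and the place where the projective representation is used — is the claim that \emph{$A[2]$ can be partitioned into $2^g$ subsets of size $2^g$, each of which indexes a basis of $V$}. I would get it by choosing a maximal isotropic $K\subset A[2]$ (for the Weil pairing) whose level lift $\tilde K\subset G(\mathcal L)$ has $\sigma_0$ as a cyclic vector — equivalently $\{\sigma_k\}_{k\in K}$ is a basis — and noting that $\bar\rho$ permutes the $\tilde K$-eigenlines of $V$, so that every coset $c+K$ again gives a basis $\{\sigma_\epsilon\}_{\epsilon\in c+K}$; the $2^g$ cosets of $K$ are the desired partition $\mathcal B_1,\dots,\mathcal B_{2^g}$. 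Granting this, the counting is immediate: if the bad set $B:=t_a^*\Theta\cap A[2]$ satisfies $\set{\sigma_\epsilon:\epsilon\in B}\subset T$ for a subspace $T\subset V$ of codimension $c$, then for each $j$ the family $\set{\sigma_\epsilon:\epsilon\in B\cap\mathcal B_j}$ is linearly independent and contained in $T$, so $\abs{B\cap\mathcal B_j}\le\dim T=2^g-c$, whence $\abs B\le 2^g(2^g-c)=2^{2g}-c\,2^g$. Statement \eqref{eq:firststat} is the case $T=H_a$, $c=1$.

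For \eqref{eq:secondstat} the new ingredient is that $\sigma_\epsilon$ vanishes to order two along $t_\epsilon^*\Theta$, so for $\epsilon\in B$ both $\sigma_\epsilon$ and its full $1$-jet vanish at $a$: $\set{\sigma_\epsilon:\epsilon\in B}\subset T_a:=\ker\bigl(j^1_a\colon V\to\mathcal L\otimes\fascio{A,a}/\mathfrak m_a^2\bigr)$. Since $t_a^*\Theta$ is not symmetric, $a\notin A[2]$; and since $\Theta$ is irreducible, the Kummer map $A\to\mathbb P(V^*)$ is an immersion away from $A[2]$ (it factors as $A\to A/\set{\pm1}\hookrightarrow\mathbb P(V^*)$, ramified exactly on $A[2]$), so $j^1_a$ is surjective and $\codim T_a=g+1$. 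Applying the counting above with $T=T_a$, $c=g+1$, gives $\abs B\le 2^{2g}-(g+1)2^g$.

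I expect the partition lemma to be the real obstacle. One must show that for \emph{every} principally polarized abelian variety some maximal isotropic $K$ has $\set{\sigma_k}_{k\in K}$ linearly independent, i.e. $\sigma_0$ is not annihilated by some $\tilde K$-invariant hyperplane for every $\tilde K$ at once. This should follow from a careful analysis in the Schrödinger model together with Riemann's relations among theta constants, but because second-order theta nulls can vanish one cannot just take the ``standard'' $K$; ruling out the pathological case where every maximal isotropic fails is exactly where the $\Sp(2g,\mathbb F_2)$-symmetry of the configuration has to be exploited. The remaining ingredients — base-point freeness of $|2\Theta|$, immersivity of the Kummer map off $A[2]$ for indecomposable $A$, and the $1$-jet computation — are routine.
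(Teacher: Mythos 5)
Your reduction is, up to the Wirtinger self-duality of $\abs{2\Theta}$, exactly the paper's: in the theta-group basis of $V=H^0\pa{A,\fascio{A}\pa{2\Theta}}$ the coordinate vector of your section $\sigma_\epsilon$ (divisor $2t_\epsilon^*\Theta$) agrees up to scalar with that of the Kummer image $\varphi\pa{\epsilon}$, your hyperplane $\ker\operatorname{ev}_a$ is dual to the hyperplane the paper cuts out with the section of divisor $t_a^*\Theta+t_{-a}^*\Theta$ (Theorem of the Square), and the coset-by-coset pigeonhole is the paper's count. The genuine gap is the one you flag yourself: the ``partition lemma'' --- existence of a maximal isotropic $K$ such that $\set{\sigma_k}_{k\in K}$ is a basis --- is never proved, and it is not a side technicality but the load-bearing step. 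It is precisely the statement the paper uses (``the elements of $\varphi\pa{H}$, and of each coset $H_b$, are a basis of the projective space''), which the paper attributes to the construction of the projective representation of the theta group (Mumford, Kempf) and does not reprove. Your own discussion shows why it cannot simply be waved through: for a fixed $K$ with a level lift, independence of $\set{\sigma_k}_{k\in K}$ is equivalent to the nonvanishing of the $2^g$ theta constants giving the components of $\sigma_0$ on the $\tilde K$-eigenlines, and second-order theta nulls do vanish on proper subloci of moduli; so one needs either an argument that some $K$ always works or a precise citation, and without it the proposal is not a proof. (Your auxiliary reduction is fine, and needs no isotropy: $U_c$ carries the set of lines $\set{[\sigma_k]}_{k\in K}$ to $\set{[\sigma_\epsilon]}_{\epsilon\in c+K}$, so one good coset forces all cosets to be good.)

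For \eqref{eq:secondstat} you take a genuinely different, and more expensive, route. You need $\codim\ker j^1_a=g+1$, i.e.\ that $\abs{2\Theta}$ generates $1$-jets at $a$, and you extract it from the assertion that the Kummer morphism is an immersion away from $A[2]$ for indecomposable $A$ --- a nontrivial theorem which would itself need a proof or an exact reference. The paper avoids this: since $\Theta$ is irreducible and $t_a^*\Theta\neq t_{-a}^*\Theta$, the intersection $t_a^*\Theta\cap t_{-a}^*\Theta$ has codimension $2$, and the Koszul sequence tensored with $\fascio{A}\pa{2\Theta}$, together with Kodaira vanishing and $h^1\pa{A,\fascio{A}}=g$, gives $\dim H^0\pa{I_{t_a^*\Theta\cap t_{-a}^*\Theta}\otimes\fascio{A}\pa{2\Theta}}=g+1$; hence the relevant points (which lie on both translates because they are $2$-torsion and $\Theta$ is symmetric) land in a linear space of codimension $g+1$, and the same pigeonhole concludes. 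That cohomological computation uses only the stated hypotheses and is more elementary than the jet argument, so I would adopt it; the remaining discrepancy between your proposal and the paper is then exactly the unproven basis lemma above.
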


\begin{proof}
Denote by $\pa{K,\langle \cdot,\cdot\rangle}$  the group of $2$-torsion points on $A$ with the perfect pairing induced by the polarization. Let
\[
\set{a_1,\ldots, a_g,b_1, \ldots, b_g}
\]
be a basis of $K$ over the field of order two such that
\[
\langle a_i,b_j \rangle =\delta_{ij},\qquad \langle a_i,a_j \rangle =0, \qquad \langle b_i,b_j \rangle =0,
\]
and let 
\begin{equation}
\label{eq:H}
 H:=\langle a_1,\ldots,a_g \rangle
\end{equation}
be the subgroup of $K$ generated by the elements $a_1,\ldots, a_g$. Consider the projective morphism $\varphi\colon A \ra \mathbb{P}^{2^{g}-1}$ associated to the divisor $2\Theta$. By the construction of the projective representation of the theta group $K\pa{2\Theta}$ (see \cite{mumequation}, \cite[Chapter 4]{kempfbook} and \cite{kempfLecture}), we know that the elements of $\varphi\pa{H}$ are a basis of the projective space. In the same way, the images of the elements of a coset $H_b$ of $H$ in $K$ generate the projective space $\mathbb{P}^{2^{g}-1}$.

\medskip

Suppose by contradiction that there exists a subset $S \subset K$ such that all points of $S$ lie on  $t_a^*\Theta$ and $\abs{S}>2^{2g}-2^g$. By the previous argument, since $H_b \subset S$ for some $b$, the points of $\varphi\pa{S}$ generate the entire projective space $\mathbb{P}^{2^{g}-1}$. On the other hand, by the Theorem of the Square (\cite[Chapter II, Section 6, Corollary 4]{MumfordLibro}),
\[
t_a^*\Theta +  t_{-a}^*\Theta \equiv 2\Theta.
\]
It follows that the points of $\varphi\pa{S}$ lie on an hyperplane of $\mathbb{P}^{2^{g}-1}$. This proves \eqref{eq:firststat}.

\medskip

Now we prove the second part. Suppose by contradiction that there exists a subset $S \subset K$ such that all points of $S$ lie on  $t_a^*\Theta$ and $\abs{S}>2^{2g}-(g+1)2^g$. We claim that 
\begin{equation}
\label{eq:claim}
\tag{$*$}
\boxed{
\text{the points in $\varphi\pa{S}$ lie on a $2^g-g-2$-plane in $\pr^{2^g-1}$.}
}
\end{equation}

Given a point $\varepsilon\in S$, it holds also $\varepsilon \in t^*_{-a}\Theta$. Thus $S\subset t^*_{a}\Theta\cap t^*_{-a}\Theta$. If $t^*_{a}\Theta$ is not symmetric and irreducible, $t^*_{a}\Theta\cap t^*_{-a}\Theta$ has codimension $2$ in $A$ and we can consider the natural exact sequence
\[
 0 \ra \fascio{A}\pa{-2\Theta}  \ra \fascio{A}\pa{-t^*_{-a}\Theta}\oplus \fascio{A}\pa{-t^*_{a} \Theta} \ra I_{t^*_a\Theta\cap t^*_{-a} \Theta} \ra 0;
\]
by tensoring it with $\fascio{A}\pa{2\Theta}$ we get
\[
 0 \ra \fascio{A}  \ra \fascio{A}\pa{t^*_a\Theta}\oplus \fascio{A}\pa{t^*_{-a} \Theta} \ra I_{t^*_a\Theta\cap t^*_{-a} \Theta}\otimes \fascio{A}\pa{2\Theta} \ra 0.
\]
Passing to the corresponding sequence on the global sections, we have
\begin{multline}
 0 \ra H^0\pa{A, \fascio{A}}  \ra H^0\pa{A,\fascio{A}\pa{t^*_a\Theta}}\oplus H^0\pa{A,\fascio{A}\pa{t^*_{-a} \Theta}} \\\ra H^0(I_{t^*_a\Theta\cap t^*_{-a} \Theta}\otimes \fascio{A}\pa{2\Theta)} \ra H^1\pa{A, \fascio{A}} \ra 0,
\end{multline}
since, by the Kodaira vanishing theorem (see e.g. \cite[Chapter 1, Section 2]{GF}), 
\[
 H^1\pa{A, \fascio{A}\pa{t^*_a\Theta}}=H^1\pa{A, \fascio{A}\pa{t^*_{-a}\Theta}}=0. 
\]
It follows that 
\[
 \dim H^0(I_{t^*_a\Theta\cap t^*_{-a} \Theta}\otimes \fascio{A}\pa{2\Theta)}=g+1.
\]
Thus the points in $\varphi\pa{t^*_a\Theta\cap t^*_{-a} \Theta}$ lie on a $2^g-g-2$-plane of $\pr^{2^g-1}$ and the claim \eqref{eq:claim} is proved.

To conclude the proof of \eqref{eq:secondstat} we notice that if $\abs{S}>2^{2g}-(g+1)2^g$ then $\abs{S\cap H_b}>2^g-\pa{g+1}$ for some coset $H_b$ of $H$ (see \eqref{eq:H}). Then it follows that $\varphi\pa{S}$ contains at least $2^g-g$ independent points and we get a contradiction. 
\end{proof}

\begin{rmk}
One might expect the right bound to be $2^{2g}-3^g$ and that this is realized only in the case of a product of elliptic curves.
\end{rmk}

\begin{rmk}
The argument of Theorem \ref{theo:theta} can be also used to obtain a bound on the number of $n$-torsion points (with $n> 2$) lying on a theta divisor.
\end{rmk}

\section{Applications}
\label{sec:applications}
In this section we apply Theorem \ref{theo:theta} to the case of Jacobians. This gives a generalization of \cite[Proposition 2.5]{PrymVP}.

\begin{prop}
\label{prop:constheta}
Let $C$ be a curve of genus $g$ and $M$ be a line bundle of degree $d\leq g-1$. Given an integer $k\leq g-1-d$, for each $L\in \pic^{2k}\pa{C}$ there are at least $2^g$ line bundles $\eta\in \pic^{k}\pa{C}$ such that $\eta^2\simeq L$ and $h^0\pa{\eta \otimes M}=0$.
\end{prop}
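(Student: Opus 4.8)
The plan is to deduce the Proposition from Theorem \ref{theo:theta} applied to the Jacobian $J(C)=\pic^0(C)$, by converting the condition $h^0\pa{\eta\otimes M}\ne 0$ into an incidence condition between a $2$-torsion point and a fixed translate of a \emph{symmetric} theta divisor. Recall that the square roots of $L$ form a torsor under $J(C)[2]$ (and one exists because $J(C)$ is divisible), so after fixing one square root $\eta_0\in\pic^k\pa{C}$ the set of all square roots of $L$ is $\set{\eta_0\otimes\kappa : \kappa\in J(C)[2]}$, of cardinality $2^{2g}$. Fix also a theta characteristic $\theta$ on $C$, so that $\Theta:=W_{g-1}-\theta\subset J(C)$ is a symmetric theta divisor, where $W_{g-1}\subset\pic^{g-1}\pa{C}$ denotes the locus of effective classes; symmetry follows from $h^0(N)=h^0(K_C-N)$ for $\deg N=g-1$ together with $K_C\simeq\theta^{2}$.

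The key step uses the hypothesis $k\le g-1-d$: choose an effective divisor $D$ on $C$ of degree $g-1-k-d\ge 0$. Multiplication of sections gives $W_{k+d}+\fascio{C}\pa{D}\subseteq W_{g-1}$, so for $\kappa\in J(C)[2]$,
\[
h^0\pa{\eta_0\otimes\kappa\otimes M}\ne 0 \ \Longrightarrow\ \eta_0\otimes\kappa\otimes M\otimes\fascio{C}\pa{D}\in W_{g-1}.
\]
Setting $x:=\eta_0\otimes M\otimes\fascio{C}\pa{D}\otimes\theta^{-1}\in\pic^0\pa{C}=J(C)$, this says $x+\kappa\in\Theta$, i.e. $\kappa$ lies on the translate $t_x^*\Theta$.

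Hence the set of $\kappa\in J(C)[2]$ with $h^0\pa{\eta_0\otimes\kappa\otimes M}\ne 0$ is contained in the set of points of order two lying on $t_x^*\Theta$, which by Theorem \ref{theo:theta}\eqref{eq:firststat} has at most $2^{2g}-2^{g}$ elements. Therefore at least $2^{2g}-\pa{2^{2g}-2^{g}}=2^{g}$ of the $2^{2g}$ square roots $\eta=\eta_0\otimes\kappa$ of $L$ satisfy $h^0\pa{\eta\otimes M}=0$, which is the assertion; the generalization of \cite[Proposition 2.5]{PrymVP} follows by specializing $M$ and $k$.

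I do not expect a genuine obstacle here once the translation above is set up: the only points needing care are ensuring the theta divisor is symmetric (so Theorem \ref{theo:theta} applies to every translate $t_x^*\Theta$), which is why one passes through a theta characteristic, the existence of a square root of $L$ (divisibility of $J(C)$), and the degree bookkeeping, which closes precisely because $\deg D=g-1-k-d\ge 0$. The one place where something could in principle go wrong — $t_x^*\Theta$ containing \emph{all} $2$-torsion points — is exactly what Theorem \ref{theo:theta} forbids, and that is the whole content of the reduction.
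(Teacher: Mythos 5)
Your proof is correct and is essentially the paper's argument: both identify the square roots of $L$ with a torsor under $J\pa{C}[2]$ and bound, via Theorem \ref{theo:theta}\eqref{eq:firststat}, the number of $2$-torsion points lying on a suitable translate of a symmetric theta divisor, concluding that at least $2^{2g}-\pa{2^{2g}-2^g}=2^g$ square roots have $h^0\pa{\eta\otimes M}=0$. The only difference is organizational: the paper first proves the case $M\simeq\fascio{C}$, $k=g-1$ and deduces the general case by twisting $L$ with $M^2\otimes\fascio{C}\pa{p}^{2n}$, whereas you absorb that twist (your effective divisor $D$ and the theta characteristic $\theta$) directly into the translation point $x$, which amounts to the same reduction.
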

\begin{proof}
We prove the statement for $M\simeq \fascio{C}$ and $k=g-1$. The general case follows from this by replacing $L$ with $M^2 \otimes L \otimes \fascio{C}\pa{p}^{2n}$, where $p$ is an arbitrary point of $C$ and $n:=g-1-k-d$.
Denote by $\Theta$ the divisor of effective line bundles of degree $g-1$ in $\pic^{g-1}\pa{C}$. Given the morphism
\begin{align*}
m_2\colon \pic^{g-1}\pa{C} &\ra \pic^{2g-2}\pa{C}\\
\eta &\mapsto \eta^2,
\end{align*}
we want to prove that $\abs{m_2^{-1}\pa{L} \cap \Theta}\leq 2^{2g}-2^g$. Let $\alpha\in m_2^{-1}\pa{L}$, we have
\[
m_2^{-1}\pa{L}=\set{\alpha\otimes\sigma \st \sigma^2=\fascio{C}}.
\]
If $\abs{m_2^{-1}\pa{L} \cap \Theta}> 2^{2g}-2^g$, then there are more than $2^{2g}-2^g$ points of order two lying on a translated of a symmetric theta divisor of $J\pa{C}$ and, by \eqref{eq:firststat} of Theorem \ref{theo:theta}, we get a contradiction.
\end{proof}

\begin{rmk}
If we apply Proposition \ref{prop:constheta} to $M=\fascio{C}, L=\omega_C$, we get that on a curve of genus $g$ there are at most $2^{2g}-2^g$ effective theta characteristics. We notice that when $g=2$ they are the $6$ line bundles of type $\fascio{C}\pa{p}$ where $p$ is a Weierstrass point. When $g=3$ and $C$ is not hyperelliptic, they correspond to the $28$ bi-tangent lines to the canonical curve.
\end{rmk}

\begin{cor}
\label{cor:ptiord2perapplicazione}
Let $C$ be a curve of genus $g$ and $M_1,\ldots M_N$ be a finite number of line bundles of degree $d\leq g-1$. Given an integer $k \leq g-1-d$, if $\eta$ is a generic line bundle of degree $k$ such that $h^0\pa{\eta^2}>0$, then
\[
h^0\pa{\eta \otimes M_i}=0 \qquad \forall i=1,\ldots, N.
\]
\end{cor}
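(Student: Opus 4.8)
The plan is to reduce the statement to the case $N=1$ and then combine Proposition \ref{prop:constheta} with the irreducibility of the locus of square roots of effective line bundles. Let $m_2\colon\pic^k\pa{C}\to\pic^{2k}\pa{C}$ be the squaring map $\eta\mapsto\eta^{\otimes 2}$, let $W_{2k}\subset\pic^{2k}\pa{C}$ denote the image of $C^{(2k)}$ under the Abel--Jacobi map, and set $Z:=\set{\eta\in\pic^k\pa{C}\st h^0\pa{\eta^{\otimes 2}}>0}=m_2^{-1}\pa{W_{2k}}$; this is the locus inside which ``$\eta$ generic with $h^0\pa{\eta^2}>0$'' is to be read (I assume $k\ge 1$, so that $\dim Z\ge 1$; for $k=0$ the locus $Z$ is finite and the statement degenerates). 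For each $i$ put $U_i:=\set{\eta\in Z\st h^0\pa{\eta\otimes M_i}=0}$, which is open in $Z$ by semicontinuity. The key observation is that, once we know that $Z$ is irreducible and that each $U_i$ is nonempty, each $U_i$ is dense in $Z$, hence so is the finite intersection $\bigcap_{i=1}^N U_i$, and any $\eta$ lying in this dense open subset proves the corollary. So it remains to check these two facts.

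Nonemptiness of $U_i$ follows at once from Proposition \ref{prop:constheta}: fix an effective divisor $D$ of degree $2k$ and apply that proposition with $M:=M_i$ and $L:=\fascio{C}\pa{D}$ (its hypotheses $\deg M_i=d\le g-1$ and $k\le g-1-d$ are precisely the ones assumed here). One obtains $\eta\in\pic^k\pa{C}$ with $\eta^{\otimes 2}\simeq\fascio{C}\pa{D}$, so that $h^0\pa{\eta^{\otimes 2}}=h^0\pa{\fascio{C}\pa{D}}>0$ and $\eta\in Z$, and with $h^0\pa{\eta\otimes M_i}=0$, so that $\eta\in U_i$.

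It remains to prove that $Z$ is irreducible; if $2k\ge g$ then $W_{2k}=\pic^{2k}\pa{C}$ and $Z=\pic^k\pa{C}$ is irreducible already, so assume $2k\le g-1$. I would argue by monodromy. The map $m_2$ is an \'etale Galois covering of $\pic^{2k}\pa{C}$ with group $J\pa{C}[2]\cong\pa{\Z/2\Z}^{2g}$, the deck transformations being the translations by $2$-torsion points; since its total space $\pic^k\pa{C}$ is connected, its monodromy is the full group $J\pa{C}[2]$. Pull this covering back along the Abel--Jacobi map $u\colon C^{(2k)}\to\pic^{2k}\pa{C}$, whose image is $W_{2k}$: the resulting \'etale $J\pa{C}[2]$-covering $\tilde{Z}:=C^{(2k)}\times_{\pic^{2k}\pa{C}}\pic^k\pa{C}$ has monodromy equal to the image of the composite $\pi_1\pa{C^{(2k)}}\xrightarrow{u_*}\pi_1\pa{\pic^{2k}\pa{C}}\twoheadrightarrow J\pa{C}[2]$. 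For $k\ge 1$ the map $u_*$ is an isomorphism, because $u$ induces an isomorphism on $H_1$ and $\pi_1\pa{C^{(2k)}}$ is abelian; hence $\tilde{Z}$ is connected, and therefore irreducible, being smooth over the smooth irreducible variety $C^{(2k)}$. Finally $\tilde{Z}\to Z$ is the base change of the surjection $u\colon C^{(2k)}\to W_{2k}$, so it is surjective, and $Z$, being its image, is irreducible too.

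The step I expect to be the main obstacle is precisely this irreducibility statement. Proposition \ref{prop:constheta} by itself only bounds, for a fixed $M_i$, the number of ``bad'' points $\eta$ in each fibre of $m_2$ (a fibre of $2^{2g}$ points) by $2^{2g}-2^g$; for $N$ line bundles this crude count allows every point of a fibre to be bad, so it is useless for $N\gg 0$. It is only because $Z$ is irreducible that one may handle the $M_i$ one at a time and then intersect the resulting dense ``good'' loci.
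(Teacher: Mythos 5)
Your proposal is correct and follows essentially the same route as the paper: the paper likewise reduces to showing that each bad locus $\Lambda_i=\set{\eta\in\Lambda\st h^0\pa{M_i\otimes\eta}>0}$ is a proper closed subset of the irreducible locus $\Lambda=m_2^{-1}\pa{W_{2k}}$, with properness coming from Proposition \ref{prop:constheta} and irreducibility from $\Lambda$ being a connected $2^{2g}$-\'etale cover of the image of $C^{(2k)}$. The only difference is that the paper merely asserts this connectedness as a remark, whereas you prove it via the monodromy of the pullback of the $J\pa{C}[2]$-covering along the Abel--Jacobi map, which is a sound justification of the step the paper leaves implicit.
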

\begin{proof}
Let
\[
\Lambda:=\set{\eta \in \pic^{k}\pa{C}: h^0\pa{\eta^2}>0},
\]
and, for each $i=1,\ldots N$, consider its closed subset
\[
\Lambda_i:=\set{\eta \in \Lambda: h^0\pa{M_i\otimes\eta}>0}.
\]
We remark that $\Lambda$ is a connected $2^{2g}$-\'etale covering of the image of the $2k$-th symmetric product of $C$ in $\pic^{2k}\pa{C}$. By Proposition \ref{prop:constheta}, for each effective $L\in \pic^{2k}\pa{C}$ there exists $\eta \in \Lambda \setminus \Lambda_i$ such that $\eta^2\simeq L$. It follows that $\Lambda_i$ is a proper subset of $\Lambda$. Since $\Lambda$ is irreducible, also the set
\[
\bigcup_{i=1}^N\Lambda_i=\set{\eta\in \pic^{k}\pa{C}: h^0\pa{M_i\otimes\eta}>0 \mbox{ for some $i$}}
\]
is a proper closed subset of $\Lambda$.
\end{proof}

\bibliographystyle{alpha}

\bibliography{biblio}
\end{document}